\documentclass{amsart}
\usepackage{amsfonts}

\setcounter{MaxMatrixCols}{10}

\newtheorem{theorem}{Theorem}
\theoremstyle{plain}

\newtheorem{corollary}{Corollary}

\numberwithin{equation}{section}
\input{tcilatex}

\begin{document}
\title[Superquadracity and Euler Lagrange identity]{Extension of Euler
Lagrange identity by superquadratic power functions}
\author{S. Abramovich}
\address{Depertment of Mathmatich, University of Haifa, Haifa, Israel}
\email{abramos@math.haifa.ac.il}
\author{S. Iveli\'{c}}
\address{Faculty of civil Technpology and Architecture, University of Split,
Croatia.}
\email{sivelic@gradst.hr}
\author{J. Pe\v{c}ari\'{c}}
\address{Faculty of Textile Technology, University of Zagreb, Croatia.}
\email{pecaric@element.hr}
\date{May 26, 2011}
\subjclass{26D15}
\keywords{Euler Lagrange Identity, Bohr's inequality, Convexity,
Superquadracity}

\begin{abstract}
Using convexity and superquadracity we extend in this paper Euler\ Lagrange
identity, Bohr's inequalitiy and the triangle inequality.
\end{abstract}

\maketitle

\section{Generalization of the triangle inequality via convexity}

In \cite{TRST} Theorem 1.1 inequalities related to the Euler Lagrange
identity are proved on Banach space. Using the convexity of $x^{p}$ $p\geq
1, $ $x\geq 0$ we prove in this section a generalization of this theorem for
complex numbers, for which Bohr's inequality is a special case. This gives
us the tools to achieve the main result of Section 2. There we extend the
result\ to the superquadratic functions $x^{p}$ $p\geq 2,$ $x\geq 0$ and
obtain the Euler Lagrange identity as a special case.

\begin{theorem}
\label{Th1} Let $x,$ $y$,$\ a,$ $b$ be complex numbers and let $\mu ,$ $\nu
, $ $\lambda $ $\in $ $%
\mathbb{R}
\backslash 0$ then 
\begin{equation*}
\frac{\left\vert x\right\vert ^{p}}{\mu }+\frac{\left\vert y\right\vert ^{p}%
}{\nu }\geq \frac{\left\vert ax+by\right\vert ^{p}}{\lambda }
\end{equation*}%
holds if
\end{theorem}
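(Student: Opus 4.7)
My plan is to reduce the complex-valued inequality to a scalar statement by the triangle inequality and then to exploit the convexity of $t\mapsto t^{p}$ on $[0,\infty)$ for $p\ge 1$, exactly as the introduction of Section~1 advertises. The first step is to write $|ax+by|\le |a|\,|x|+|b|\,|y|$, and since $t^{p}$ is nondecreasing on $[0,\infty)$, to raise both sides to the $p$-th power.

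Next I would introduce free weights $\alpha,\beta>0$ with $\alpha+\beta=1$ in order to express $|a|\,|x|+|b|\,|y|$ as a genuine convex combination,
\[
|a|\,|x|+|b|\,|y|=\alpha\cdot\tfrac{|a|\,|x|}{\alpha}+\beta\cdot\tfrac{|b|\,|y|}{\beta},
\]
and then apply convexity of $t^{p}$ to obtain
\[
\bigl(|a|\,|x|+|b|\,|y|\bigr)^{p}\le\frac{|a|^{p}|x|^{p}}{\alpha^{p-1}}+\frac{|b|^{p}|y|^{p}}{\beta^{p-1}}.
\]
Dividing by $\lambda$ and comparing with the right-hand side $|x|^{p}/\mu+|y|^{p}/\nu$, a sufficient condition is the existence of $\alpha,\beta>0$ with $\alpha+\beta=1$ such that $\mu|a|^{p}\le\lambda\alpha^{p-1}$ and $\nu|b|^{p}\le\lambda\beta^{p-1}$. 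Solving for the optimal weights (assuming $\mu,\nu,\lambda>0$ and $p>1$), this is equivalent to
\[
\Bigl(\tfrac{\mu|a|^{p}}{\lambda}\Bigr)^{1/(p-1)}+\Bigl(\tfrac{\nu|b|^{p}}{\lambda}\Bigr)^{1/(p-1)}\le 1,
\]
which I expect is, up to rearrangement, the hypothesis stated after ``holds if.''

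Both ingredients — the triangle inequality and the convexity bound — are routine; the genuine content of the theorem lies in the correct bookkeeping of the weights $\alpha,\beta$ against the parameters $\mu,\nu,\lambda,a,b$, together with the sign conventions that make all quantities meaningful. As a sanity check for the condition I have proposed, setting $p=2$, $a=b=1$, $\lambda=1$, $\mu=1/p_{0}$, $\nu=1/q_{0}$ with $1/p_{0}+1/q_{0}=1$ reduces it to $\mu+\nu=1$ and recovers Bohr's classical inequality $|x+y|^{2}\le p_{0}|x|^{2}+q_{0}|y|^{2}$, which the introduction explicitly names as a special case.
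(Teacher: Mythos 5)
Your argument for case (i) is correct, and the condition you arrive at is exactly the paper's hypothesis (i): since $p/(p-1)=q$, the inequality $\bigl(\mu|a|^{p}/\lambda\bigr)^{1/(p-1)}+\bigl(\nu|b|^{p}/\lambda\bigr)^{1/(p-1)}\le 1$ rearranges to $\lambda^{1/(p-1)}\ge\mu^{1/(p-1)}|a|^{q}+\nu^{1/(p-1)}|b|^{q}$. Your route is the dual of the paper's: the paper (in Theorem \ref{Th2}, of which Theorem \ref{Th1} is the case $n=2$) works on the left-hand side, writing $\sum f(x_i)/\mu_i=\sum Q_i f\bigl(f^{-1}(f(x_i)/(\mu_i Q_i))\bigr)$ for $f(t)=t^{p}$, pushing it down by Jensen's inequality, and then solving for the weights $Q_i$ that turn the resulting expression into $|\sum a_ix_i|^{p}/\lambda$; you work on the right-hand side, pushing it up via the triangle inequality and the convexity bound $\bigl(\alpha\cdot u/\alpha+\beta\cdot v/\beta\bigr)^{p}\le u^{p}/\alpha^{p-1}+v^{p}/\beta^{p-1}$, then optimizing over $\alpha+\beta=1$. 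The optimal weights coincide up to normalization with the paper's $Q_i$, so the constants agree; your version is more elementary and makes it clearer why the constant is sharp. (One caveat worth recording: the paper later reuses the intermediate objects $Q_i$, $A_i$ of its own proof to derive the superquadratic refinement in Theorem \ref{Th3}, so the seemingly roundabout Jensen formulation is not wasted effort there.)

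The genuine gap is that the theorem asserts three sets of sufficient conditions, and you have proved only the first. Hypotheses (ii) and (iii) allow negative parameters, e.g.\ $\mu<0$, $\nu>0$, $\lambda<0$ with $|\lambda|^{1/(p-1)}\le|\mu|^{1/(p-1)}|a|^{q}-|\nu|^{1/(p-1)}|b|^{q}$. Your argument explicitly assumes $\mu,\nu,\lambda>0$ when you compare coefficients and solve for the optimal weights, and it does not carry over directly: dividing by a negative $\lambda$ reverses the comparison, and a negative $\mu$ makes the term $|x|^{p}/\mu$ work against you rather than for you. The paper's device is to move every negatively weighted term to the other side so that all coefficients become positive, e.g.\ to rewrite the claim in case (ii) as $|ax+by|^{p}/|\lambda|+|y|^{p}/\nu\ge|x|^{p}/|\mu|$, substitute $z_{1}=ax+by$, $z_{2}=y$, $x=\tfrac{1}{a}z_{1}-\tfrac{b}{a}z_{2}$, and apply case (i) to the rearranged inequality; unwinding the resulting condition yields exactly (ii), and (iii) follows by symmetry. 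You would need to add this reduction (and note that it requires $a\ne 0$ there) to complete the proof of the full statement.
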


(i) \ $\mu >0,$ $\nu >0,$ $\lambda >0$ and 
\begin{equation*}
\left\vert \lambda \right\vert ^{1/\left( p-1\right) }\geq \left\vert \mu
\right\vert ^{1/\left( p-1\right) }\left\vert a\right\vert ^{q}+\left\vert
\nu \right\vert ^{1/\left( p-1\right) }\left\vert b\right\vert ^{q},
\end{equation*}

(ii) $\ \mu <0,$ $\nu >0,$ $\lambda <0$ and%
\begin{equation*}
\left\vert \lambda \right\vert ^{1/\left( p-1\right) }\leq \left\vert \mu
\right\vert ^{1/\left( p-1\right) }\left\vert a\right\vert ^{q}-\left\vert
\nu \right\vert ^{1/\left( p-1\right) }\left\vert b\right\vert ^{q},
\end{equation*}

(iii) \ $\mu >0,$ $\nu <0,$ $\lambda <0$ and%
\begin{equation*}
\left\vert \lambda \right\vert ^{1/\left( p-1\right) }\leq -\left\vert \mu
\right\vert ^{1/\left( p-1\right) }\left\vert a\right\vert ^{q}+\left\vert
\nu \right\vert ^{1/\left( p-1\right) }\left\vert b\right\vert ^{q},
\end{equation*}%
where $p>1$ and $\frac{1}{p}+\frac{1}{q}=1$.

\textbf{Comment:} \bigskip\ Bohr's inequality 
\begin{equation*}
sx^{p}+ty^{p}\geq \frac{1}{\left( s-1\right) s^{p-2}}\left( \left(
s-1\right) x+y\right) ^{p}\geq \frac{1}{2^{p-2}}\left( \left( s-1\right)
x+y\right) ^{p},
\end{equation*}%
when $1<s\leq 2,$ $\frac{1}{s}+\frac{1}{t}=1,$ $p>1$ is a special case of
Theorem \ref{Th1} for $a=s-1$, $b=1,$ $\mu =\frac{1}{s},$ $\nu =\frac{1}{t},$
$\lambda =\left( s-1\right) s^{p-2}$ (see also \cite{ABP}).

\bigskip We first prove a theorem similar Theorem 1.1 in \cite{TRST} but by
dealing with a general integer $n$ instead of $n=2.$ Our proof is completely
different than the proof in \cite{TRST}. It relies on the convexity of $%
f\left( x\right) =x^{p},$ $p>1,$ $x\geq 0.$

\begin{theorem}
\label{Th2} Let $x_{i},$ a$_{i},$ $i=1,...,n$ be complex numbers and $p>1,$ $%
\frac{1}{q}+\frac{1}{p}=1.$ Case (i): If $\mu _{i}>0,$ $i=1,...,n,$ $\lambda
>0,$ then%
\begin{equation}
\sum_{i=1}^{n}\frac{\left\vert x_{i}\right\vert ^{p}}{\mu _{i}}\geq \frac{%
\left\vert \sum a_{i}x_{i}\right\vert ^{p}}{\lambda }  \label{eq1.1}
\end{equation}%
where%
\begin{equation}
\left\vert \lambda \right\vert ^{\frac{1}{p-1}}\geq \sum_{i=1}^{n}\left\vert
\mu _{i}\right\vert ^{\frac{1}{p-1}}\left\vert a_{i}\right\vert ^{q}.
\label{eq1.2}
\end{equation}%
Case (ii): If $\mu _{1}>0$, $\mu _{i}<0,$ $i=2,...,n,$ $\lambda >0,$ then%
\begin{equation}
\sum_{i=1}^{n}\frac{\left\vert x_{i}\right\vert ^{p}}{\mu _{i}}\leq \frac{%
\left\vert \sum a_{i}x_{i}\right\vert ^{p}}{\lambda }  \label{eq1.3}
\end{equation}%
where%
\begin{equation}
\left\vert \lambda \right\vert ^{\frac{1}{p-1}}\leq \left\vert \mu
_{1}\right\vert ^{\frac{1}{p-1}}\left\vert a_{1}\right\vert
^{q}-\sum_{i=2}^{n}\left\vert \mu _{i}\right\vert ^{\frac{1}{p-1}}\left\vert
a_{i}\right\vert ^{q}.  \label{eq1.4}
\end{equation}%
Case (iii): If $\mu _{1}<0$, $\mu _{i}>0,$ $i=2,...,n,$ $\lambda <0,$ then%
\begin{equation*}
\sum_{i=1}^{n}\frac{\left\vert x_{i}\right\vert ^{p}}{\mu _{i}}\geq \frac{%
\left\vert \sum a_{i}x_{i}\right\vert ^{p}}{\lambda }
\end{equation*}%
where $\lambda $\ satisfies (\ref{eq1.4})
\end{theorem}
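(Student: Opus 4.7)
The strategy is to combine the triangle inequality with the convexity of $t\mapsto t^p$ on $[0,\infty)$, as the authors signal. First bound $|\sum a_i x_i|^p \le \bigl(\sum |a_i|\,|x_i|\bigr)^p$. Next, introduce positive weights $w_i$ with $\sum w_i = 1$ and apply Jensen's inequality to $t^p$:
\[
\Bigl(\sum |a_i|\,|x_i|\Bigr)^p = \Bigl(\sum w_i\cdot\frac{|a_i|\,|x_i|}{w_i}\Bigr)^p \le \sum \frac{|a_i|^p\,|x_i|^p}{w_i^{p-1}}.
\]
Choosing $w_i$ proportional to $|a_i|^q \mu_i^{1/(p-1)}$ and using $q(p-1)=p$ makes the right-hand side collapse to $S^{p-1}\sum |x_i|^p/\mu_i$, where $S=\sum_j |a_j|^q \mu_j^{1/(p-1)}$. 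Hypothesis (\ref{eq1.2}) reads $S\le \lambda^{1/(p-1)}$, so $S^{p-1}\le \lambda$ and Case (i) is done.

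\textbf{Cases (ii) and (iii).} I would not repeat the convexity argument but reduce both to Case (i) by isolating the variable whose denominator has the exceptional sign. Setting $y=\sum a_i x_i$ and (assuming $a_1\neq 0$; the case $a_1=0$ collapses to a smaller instance) $a_1 x_1 = y-\sum_{i\ge 2} a_i x_i$, I would apply Case (i) to the $n$-tuple $(y,x_2,\ldots,x_n)$ with coefficients $(1,-a_2,\ldots,-a_n)$. In Case (ii), take the new $\mu$-parameters to be $(\lambda,|\mu_2|,\ldots,|\mu_n|)$ and the new outer parameter to be $\mu_1|a_1|^p$; in Case (iii), take them to be $(|\lambda|,\mu_2,\ldots,\mu_n)$ and $|\mu_1|\,|a_1|^p$. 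All are positive, so Case (i) applies. The transported form of (\ref{eq1.2}) becomes exactly (\ref{eq1.4}) via $p/(p-1)=q$, and one rearrangement (using $\mu_i<0$ for $i\ge 2$ in Case (ii), or $\mu_1,\lambda<0$ in Case (iii)) returns the desired inequality.

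\textbf{Main obstacle.} The genuine content is in Case (i), and the only real choice there is the weight $w_i$, which is forced by requiring equality in Jensen on the extremal profile $|x_i|^p \propto \mu_i w_i^{p-1}/|a_i|^p$. The remaining work in Cases (ii) and (iii) is bookkeeping: verifying that the substitution produces only positive parameters so that Case (i) applies, and that the translated form of (\ref{eq1.2}) matches (\ref{eq1.4}) without a sign mishap.
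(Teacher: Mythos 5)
Your proposal is correct and follows essentially the same route as the paper: Case (i) is Jensen's inequality for the convex function $t\mapsto t^{p}$ with weights proportional to $\mu _{i}^{1/(p-1)}\left\vert a_{i}\right\vert ^{q}$ (your $w_{i}$ are exactly the normalized $Q_{i}$ of the paper's proof, presented directly rather than through the paper's detour via a general $f$ with $f^{-1}(AB)\geq f^{-1}(A)f^{-1}(B)$). Cases (ii) and (iii) are likewise handled by the paper exactly as you propose, by isolating the term with the exceptional sign and reapplying Case (i), with condition (\ref{eq1.4}) appearing after clearing the factor $\left\vert a_{1}\right\vert ^{q}$.
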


\begin{proof}
Case (i): It is obvious that it is enough to prove this case\ of the theorem
for $a_{i},\ x_{i}\geq 0,$ $i=1,...,n$ and show that here 
\begin{equation}
\sum_{i=1}^{n}\frac{x_{i}^{p}}{\mu _{i}}\geq \frac{\sum a_{i}x_{i}^{p}}{%
\lambda }  \label{eq1.5}
\end{equation}%
holds if%
\begin{equation}
\lambda ^{\frac{1}{p-1}}\geq \sum_{i=1}^{n}\mu _{i}^{\frac{1}{p-1}}a_{i}^{q}.
\label{eq1.6}
\end{equation}

Let us consider first a more general inequality than (\ref{eq1.5}) where
instead of the function $f(x)=x^{p},$ $p>1,$ $x\geq 0,$ we deal with a
positive strictly increasing convex function $f$ on $\left( 0,\infty \right) 
$\ which satisfies $f^{-1}\left( AB\right) \geq $\ $f^{-1}\left( A\right)
f^{-1}\left( B\right) ,$ $A,$ $B>0.$ In this case we write 
\begin{equation}
\sum_{i=1}^{n}\frac{f(x_{i})}{\mu _{i}}=\sum_{i=1}^{n}Q_{i}f\left(
f^{-1}\left( \frac{f\left( x_{i}\right) }{\mu _{i}Q_{i}}\right) \right) ,
\label{eq1.7}
\end{equation}%
and then by the convexity of $f$\ we get%
\begin{eqnarray}
&&\sum_{i=1}^{n}Q_{i}f\left( f^{-1}\left( \frac{f\left( x_{i}\right) }{\mu
_{i}Q_{i}}\right) \right)  \label{eq1.8} \\
&\geq &\left( \sum_{j=1}^{n}Q_{j}\right) f\left( \frac{%
\sum_{i=1}^{n}Q_{i}f^{-1}\left( \frac{f\left( x_{i}\right) }{\mu _{i}Q_{i}}%
\right) }{\sum_{j=1}^{n}Q_{j}}\right) .  \notag
\end{eqnarray}%
As $f^{-1}\left( AB\right) \geq $\ $f^{-1}\left( A\right) f^{-1}\left(
B\right) $ and $f$\ is increasing we get that 
\begin{eqnarray}
&&\left( \sum_{j=1}^{n}Q_{j}\right) f\left( \frac{\sum_{i=1}^{n}Q_{i}f^{-1}%
\left( \frac{f\left( x_{i}\right) }{\mu _{i}Q_{i}}\right) }{%
\sum_{j=1}^{n}Q_{j}}\right)  \label{eq1.9} \\
&\geq &\left( \sum_{j=1}^{n}Q_{j}\right) f\left( \frac{%
\sum_{i=1}^{n}x_{i}Q_{i}f^{-1}\left( \frac{1}{\mu _{i}Q_{i}}\right) }{%
\sum_{j=1}^{n}Q_{j}}\right) .  \notag
\end{eqnarray}%
Therefore, from (\ref{eq1.7}), (\ref{eq1.8}) and (\ref{eq1.9}) it is enough
to solve the equality%
\begin{equation*}
\left( \sum_{j=1}^{n}Q_{j}\right) f\left( \frac{%
\sum_{i=1}^{n}x_{i}Q_{i}f^{-1}\left( \frac{1}{\mu _{i}Q_{i}}\right) }{%
\sum_{j=1}^{n}Q_{j}}\right) =\frac{f\left( \sum_{i=1}^{n}a_{i}x_{i}\right) }{%
\overline{\lambda }},
\end{equation*}%
in other words to solve 
\begin{equation}
\frac{Q_{i}f^{-1}\left( \frac{1}{\mu _{i}Q_{i}}\right) }{\sum_{j=1}^{n}Q_{j}}%
=a_{i},\qquad i=1,...,n  \label{eq1.10}
\end{equation}%
and then insert 
\begin{equation}
\overline{\lambda }=\left( \sum_{j=1}^{n}Q_{j}\right) ^{-1}  \label{eq1.11}
\end{equation}%
in order for $\overline{\lambda }$\ to satisfy for given $\mu _{i}>0$ and $%
a_{i}\geq 0,$ $i=1,...,n$ the inequality \ \ \ 
\begin{equation}
\sum_{i=1}^{n}\frac{f\left( x_{i}\right) }{\mu _{i}}\geq \frac{f\left(
\sum_{i=1}^{n}a_{i}x_{i}\right) }{\overline{\lambda }}.  \label{eq1.12}
\end{equation}%
Replacing $\overline{\lambda }$\ by 
\begin{equation}
\lambda >\overline{\lambda }=\left( \sum_{j=1}^{n}Q_{j}\right) ^{-1}
\label{eq1.13}
\end{equation}%
inequality (\ref{eq1.12}) holds too.

Now we return to deal with our function $f\left( x\right) =x^{p},$ $p>1,$ $%
x\geq 0.$ This is a nonnegative increasing convex function for $x\geq 0$ and
it satisfies $f^{-1}\left( AB\right) =$\ $f^{-1}\left( A\right) f^{-1}\left(
B\right) $\ for $A,B>0$.

Returning to the proof of (\ref{eq1.5}) under the condition (\ref{eq1.6}) we
obtain from (\ref{eq1.10}) that%
\begin{equation}
Q_{i}\left( \mu _{i}Q_{i}\right) ^{-\frac{1}{p}}\left(
\sum_{j=1}^{n}Q_{j}\right) ^{-1}=a_{i},\qquad i=1,...,n.  \label{eq1.14}
\end{equation}%
Solving (\ref{eq1.14}) we get that

\begin{equation}
Q_{i}=\frac{\mu _{i}^{\frac{1}{p-1}}a_{i}^{q}}{\left( \sum_{j=1}^{n}\mu
_{j}^{\frac{1}{p-1}}a_{j}^{q}\right) ^{p}},\quad i=1,...,n,  \label{eq1.15}
\end{equation}%
and from (\ref{eq1.11}) that%
\begin{equation}
\overline{\lambda }=\left( \sum_{i=1}^{n}Q_{i}\right) ^{-1}=\left(
\sum_{i=1}^{n}\mu _{i}^{\frac{1}{p-1}}a_{i}^{q}\right) ^{p-1}.
\label{eq1.16}
\end{equation}%
Hence from (\ref{eq1.13}), (\ref{eq1.5}) and (\ref{eq1.6}) are proved when $%
a_{i},$\ $x_{i}\geq 0,$\ $i=1,...,n$\ \ and therefore (\ref{eq1.1}) and (\ref%
{eq1.2}) are proved for the complex numbers $x_{i},$ $a_{i},$\ $i=1,...n.$

Case (ii): If $\mu _{1}>0,$ $\mu _{i}<0$, $i=2,...,n$ and $\lambda >0$ we
rewrite (\ref{eq1.3}) as 
\begin{equation}
\frac{\left\vert \sum_{i=2}^{n}a_{i}x_{i}\right\vert ^{p}}{\left\vert
\lambda \right\vert }+\sum_{i=1}^{n}\frac{\left\vert x_{i}\right\vert ^{p}}{%
\left\vert \mu _{i}\right\vert }\geq \frac{\left\vert x_{1}\right\vert ^{p}}{%
\left\vert \mu _{1}\right\vert }.  \label{eq1.17}
\end{equation}%
Let us make the substitutions 
\begin{eqnarray*}
\left\vert \mu _{i}\right\vert &=&\nu _{i},\qquad i=2,...,n,\qquad
\left\vert \mu _{i}\right\vert =\Lambda ,\qquad \left\vert \lambda
\right\vert =\nu , \\
z_{1} &=&\sum_{i=1}^{n}a_{i}x_{i},\qquad z_{i}=x_{i},\qquad i=2,...,n,
\end{eqnarray*}%
and 
\begin{equation*}
x_{1}=\frac{1}{a_{1}}z_{1}+\sum_{i=2}^{n}\left( \frac{-a_{i}}{a_{1}}\right)
z_{i}=\sum_{i=1}^{n}C_{i}z_{i}.
\end{equation*}%
Inequality (\ref{eq1.17}) becomes 
\begin{equation*}
\sum_{i=1}^{n}\frac{\left\vert z_{i}\right\vert ^{p}}{\nu _{i}}\geq \frac{%
\left\vert \sum_{i=1}^{n}C_{i}z_{i}\right\vert ^{p}}{\Lambda }.
\end{equation*}%
Therefore from Case (i) we get that 
\begin{equation*}
\Lambda ^{\frac{1}{p-1}}\geq \sum_{i=1}^{n}\nu _{i}^{\frac{1}{p-1}%
}\left\vert C_{i}\right\vert ^{q}.
\end{equation*}%
In other words (\ref{eq1.3}) holds when 
\begin{equation*}
\left\vert \mu _{1}\right\vert ^{\frac{1}{p-1}}\geq \frac{\left\vert \lambda
\right\vert ^{\frac{1}{p-1}}}{\left\vert a_{1}\right\vert ^{q}}%
+\sum_{i=2}^{n}\left\vert \mu _{i}\right\vert ^{\frac{1}{p-1}}\left\vert 
\frac{a_{i}}{a_{1}}\right\vert ^{q},
\end{equation*}%
which is the same as (\ref{eq1.4}).

The proof of Case (iii) follows immediately from Case (ii).

This completes the proof of the theorem.
\end{proof}

\begin{corollary}
For $n=2$ we get Theorem \ref{Th1} which is Theorem 1.1 in \cite{TRST} for
complex numbers $x_{i},$ $a_{i},$\ $i=1,...n.$
\end{corollary}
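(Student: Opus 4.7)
The plan is to derive each case of Theorem \ref{Th1} by specializing Theorem \ref{Th2} to $n=2$, using the rearrangement device already employed in the proof of Theorem \ref{Th2}(ii) whenever the sign pattern does not match Case (i) directly.

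Case (i) of Theorem \ref{Th1} is immediate: under the identification $(x_{1},x_{2},a_{1},a_{2},\mu_{1},\mu_{2})=(x,y,a,b,\mu,\nu)$, the hypotheses $\mu,\nu,\lambda>0$ together with condition (\ref{eq1.2}) at $n=2$ reproduce verbatim the statement of Theorem \ref{Th1}(i).

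For Case (ii), where $\mu<0$, $\nu>0$, $\lambda<0$, I would first rewrite the target inequality in the equivalent form
\[
\frac{\left\vert ax+by\right\vert ^{p}}{\left\vert \lambda \right\vert }+\frac{\left\vert y\right\vert ^{p}}{\left\vert \nu \right\vert }\geq \frac{\left\vert x\right\vert ^{p}}{\left\vert \mu \right\vert }
\]
by absorbing the negative signs in the original inequality. Setting $z_{1}=ax+by$ and $z_{2}=y$, so that $x=\tfrac{1}{a}z_{1}-\tfrac{b}{a}z_{2}$, this becomes an instance of Case (i) of Theorem \ref{Th2} applied to $(z_{1},z_{2})$ with coefficients $(\tfrac{1}{a},-\tfrac{b}{a})$ and weights $\tilde{\mu}_{1}=|\lambda|$, $\tilde{\mu}_{2}=|\nu|$, $\tilde{\lambda}=|\mu|$ --- precisely the trick already exploited in the proof of Theorem \ref{Th2}(ii). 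The corresponding specialization of (\ref{eq1.2}) reads $|\mu|^{1/(p-1)}\geq |\lambda|^{1/(p-1)}|a|^{-q}+|\nu|^{1/(p-1)}|b/a|^{q}$, which after multiplying through by $|a|^{q}$ becomes exactly the condition of Theorem \ref{Th1}(ii). Case (iii) of Theorem \ref{Th1} is then obtained by the symmetry $(x,a,\mu)\leftrightarrow (y,b,\nu)$.

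I do not anticipate any real obstacle: the corollary is essentially a dictionary between the $n=2$ specialization of Theorem \ref{Th2} and the three sign configurations spelled out in Theorem \ref{Th1}. The only step requiring a little care is the sign bookkeeping when rearranging in Case (ii) and verifying that the inequality direction and the $1/(p-1)$-power condition translate correctly after the substitution.
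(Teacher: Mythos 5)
Your proposal is correct and matches what the paper intends: the corollary is a direct specialization of Theorem \ref{Th2} to $n=2$, and your rearrangement for the mixed-sign case is exactly the substitution device the paper itself uses to derive Cases (ii) and (iii) of Theorem \ref{Th2} from Case (i). The only redundancy is that Theorem \ref{Th1}(ii) and (iii) are already covered verbatim by Theorem \ref{Th2} Case (iii) at $n=2$ (with $\mu_{1}=\mu$, $\mu_{2}=\nu$ or the roles swapped), so you could simply cite it rather than re-running the rearrangement.
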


\section{Extension of Euler Lagrange type identity}

Now we extend the Euler Lagrange type inequalities by introducing the set of
superquadratic functions and its basic properties. Euler Lagrange identity
is a special case of this extension.

A function $f$ :$[0,b)\rightarrow 
\mathbb{R}
$ is superquadratic provided that for all $x\in \lbrack 0,b)$ there exists a
constant $C_{f}(x)\in 
\mathbb{R}
\ $such that\ the inequality%
\begin{equation}
f(y)\geq f(x)+C_{f}(x)(y-x)+f(\left\vert y-x)\right\vert ,  \label{eq2.1}
\end{equation}%
holds for all $y\in \lbrack 0,b),$ (\cite[Definition 2.1]{AJS}). The
function $f:$ $[0,b)\rightarrow 
\mathbb{R}
$ is subquadratic if $-f$ is supequadratic.

According to \cite[Theorem 2.2]{AJS} the inequality \ \ 
\begin{eqnarray}
&&f\left( \int h(s)d\mu (s)\right)  \label{eq2.2} \\
&\leq &\int f(h(s))-f\left( \left\vert h(s)-\int h(s)d\mu (s)\right\vert
\right) d\mu (s)  \notag
\end{eqnarray}%
holds for all probability measures $\mu $ and all nonnegative $\mu $%
-integrable $h$, if and only if $f$ is superquadratic.

The discrete version of (\ref{eq2.2}) is 
\begin{equation}
f\left( \sum_{i=1}^{n}\alpha _{i}x_{i}\right) \leq \sum_{i=1}^{n}\alpha
_{i}\left( f\left( x_{i}\right) -f\left( \left\vert
x_{i}-\sum_{j=1}^{n}\alpha _{j}x_{j}\right\vert \right) \right) ,\quad
\label{eq2.3}
\end{equation}%
$x_{i}\in \lbrack 0,b),\quad \alpha _{i}\geq 0,\quad 1=i,...,n,\quad
\sum_{i=1}^{n}\alpha _{i}=1.$

The power functions $f\left( x\right) =x^{p},$ $x\geq 0,$ are convex and
superquadratic for $p\geq 2,$ and convex and subquadratic for $1\leq $ $%
p\leq 2$. Inequalities$\ $(\ref{eq2.1})$,\ $(\ref{eq2.2}) and (\ref{eq2.3})
reduce to inequalities for the function $f\left( x\right) =x^{2}.$

Now we use (\ref{eq2.3}) in order to get the Euler Lagrange type inequality.

\begin{theorem}
\bigskip \label{Th3} Let $x_{i}\geq 0,$ $a_{i}\geq 0,$ $\mu _{i}>0,$ $%
i=1,...,n,$ $p\geq 2,$ $\frac{1}{p}+\frac{1}{q}=1.$ Then%
\begin{eqnarray}
&&  \label{eq2.4} \\
\sum_{i=1}^{n}\frac{x_{i}^{p}}{\mu _{i}} &\geq &\frac{\left( \sum
a_{i}x_{i}\right) ^{p}}{\left( \sum_{j=1}^{n}\mu _{j}^{\frac{1}{p-1}%
}a_{j}^{q}\right) ^{p-1}}  \notag \\
&&+\frac{\sum_{i=1}^{n}\mu _{i}^{\frac{1}{p-1}}a_{i}^{q}}{\left(
\sum_{j=1}^{n}\mu _{j}^{\frac{1}{p-1}}a_{j}^{q}\right) ^{p}}\left(
\left\vert \left( \frac{1}{a_{i}\mu _{i}}\right) ^{\frac{1}{p-1}}\left(
\sum_{j=1}^{n}\mu _{j}^{\frac{1}{p-1}}a_{j}^{q}\right)
x_{i}-\sum_{j=1}^{n}a_{j}x_{j}\right\vert \right) ^{p}.  \notag
\end{eqnarray}%
If $1<p\leq 2$ \ the inverse of (\ref{eq2.4}) holds.
\end{theorem}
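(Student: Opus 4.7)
The plan is to mimic the proof of Case (i) of Theorem \ref{Th2}, but replace the pure convexity (Jensen) step with the stronger superquadracity inequality (\ref{eq2.3}), which applies precisely because $f(x)=x^p$ is superquadratic for $p\ge 2$. The extra subtracted term that appears in (\ref{eq2.3}) will be exactly the second summand on the right of (\ref{eq2.4}).

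Concretely, write $S=\sum_{j=1}^{n}\mu_{j}^{1/(p-1)}a_{j}^{q}$ and, following the proof of Theorem \ref{Th2}, set
\begin{equation*}
Q_{i}=\frac{\mu_{i}^{1/(p-1)}a_{i}^{q}}{S^{p}},\qquad \alpha_{i}=\frac{Q_{i}}{\sum_{j}Q_{j}}=\frac{\mu_{i}^{1/(p-1)}a_{i}^{q}}{S},\qquad y_{i}=x_{i}(\mu_{i}Q_{i})^{-1/p}.
\end{equation*}
The relations $\sum_{j}Q_{j}=1/S^{p-1}$ and $q/p=1/(p-1)$ give $y_{i}=Sx_{i}/(a_{i}\mu_{i})^{1/(p-1)}$, and (\ref{eq1.10}) is precisely the statement $\alpha_{i}y_{i}=a_{i}x_{i}$, so $\sum_{i}\alpha_{i}y_{i}=\sum_{i}a_{i}x_{i}$.

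Now apply (\ref{eq2.3}) to the superquadratic function $f(u)=u^{p}$ with weights $\alpha_{i}$ and points $y_{i}$, and rearrange:
\begin{equation*}
\sum_{i=1}^{n}\alpha_{i}y_{i}^{p}\;\ge\;\Bigl(\sum_{i=1}^{n}a_{i}x_{i}\Bigr)^{p}+\sum_{i=1}^{n}\alpha_{i}\Bigl|y_{i}-\sum_{j=1}^{n}a_{j}x_{j}\Bigr|^{p}.
\end{equation*}
A direct simplification (using $q=p/(p-1)$) yields $\alpha_{i}y_{i}^{p}=S^{p-1}x_{i}^{p}/\mu_{i}$, so the left side equals $S^{p-1}\sum x_{i}^{p}/\mu_{i}$; dividing through by $S^{p-1}$ and substituting the explicit form of $y_{i}$ produces (\ref{eq2.4}) exactly. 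For the range $1<p\le 2$, $x^{p}$ is subquadratic, so (\ref{eq2.3}) reverses and the entire argument produces the reversed inequality, finishing the theorem.

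The main obstacle is essentially bookkeeping: verifying algebraically that the particular $y_{i}$ dictated by the earlier proof is the same quantity that appears inside the absolute value of (\ref{eq2.4}), and that $\alpha_{i}y_{i}^{p}$ collapses cleanly to $S^{p-1}x_{i}^{p}/\mu_{i}$. No new analytic idea beyond (\ref{eq2.3}) is needed; the choice of $Q_{i}$ in (\ref{eq1.15}) was already engineered to make these identifications work.
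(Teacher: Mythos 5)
Your proposal is correct and follows essentially the same route as the paper: it reuses the quantities $Q_{i}$ and $A_{i}$ (your $y_{i}$) engineered in the proof of Theorem \ref{Th2} and replaces the Jensen step by the superquadracity inequality (\ref{eq2.3}), whose extra term gives the second summand of (\ref{eq2.4}). The algebraic identities you check ($\alpha_i y_i = a_i x_i$ and $\alpha_i y_i^p = S^{p-1}x_i^p/\mu_i$) are exactly (\ref{eq2.8}) and (\ref{eq2.5}) in the paper's argument.
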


\begin{proof}
In Theorem \ref{Th2} we showed that for $x_{i}\geq 0,$ $a_{i}\geq 0,$\ $\mu
_{i}>0,$ $i=1,...,n.$ inequalities (\ref{eq1.5}) and (\ref{eq1.6}) hold.
There%
\begin{equation}
\sum_{i=1}^{n}Q_{i}\left( A_{i}\right) ^{p}=\sum_{i=1}^{n}\frac{\left\vert
x_{i}\right\vert ^{p}}{\mu _{i}}  \label{eq2.5}
\end{equation}%
where 
\begin{equation}
Q_{i}=\frac{\mu _{i}^{\frac{1}{p-1}}a_{i}^{q}}{\left( \sum_{j=1}^{n}\mu
_{j}^{\frac{1}{p-1}}a_{j}^{q}\right) ^{p}},\qquad i=1,...,n,  \label{eq2.6}
\end{equation}%
\begin{equation}
A_{i}=\frac{1}{\left( a_{i}\mu _{i}\right) ^{\frac{1}{p-1}}}\left(
\sum_{j=1}^{n}\mu _{j}^{\frac{1}{p-1}}a_{j}^{q}\right) x_{i},\qquad
i=1,,...,n  \label{eq2.7}
\end{equation}%
and%
\begin{equation}
\frac{\sum_{i=1}^{n}Q_{i}A_{i}}{\sum_{j=1}^{n}Q_{j}}%
=\sum_{i=1}^{n}a_{i}x_{i}.  \label{eq2.8}
\end{equation}%
Therefore, as $f\left( x\right) =x^{p},$ $p\geq 2,$ $x\geq 0$ is
superquadratic, (\ref{eq2.3}) becomes by inserting (\ref{eq2.6})-(\ref{eq2.8}%
) 
\begin{eqnarray}
&&\sum_{i=1}^{n}Q_{i}\left( A_{i}\right) ^{p}  \label{eq2.9} \\
&=&\frac{\sum_{i=1}^{n}\mu _{i}^{\frac{1}{p-1}}a_{i}^{q}\left( \left( \frac{1%
}{a_{i}\mu _{i}}\right) ^{\frac{1}{p-1}}\left( \sum_{j=1}^{n}\mu _{j}^{\frac{%
1}{p-1}}a_{j}^{q}\right) x_{i}\right) ^{p}}{\left( \sum_{j=1}^{n}\mu _{j}^{%
\frac{1}{p-1}}a_{j}^{q}\right) ^{p}}  \notag \\
&\geq &\frac{\left( \sum_{i=1}^{n}a_{i}x_{i}\right) ^{p}}{\left(
\sum_{j=1}^{n}\mu _{j}^{\frac{1}{p-1}}a_{j}^{q}\right) ^{p-1}}  \notag \\
&&+\frac{\sum_{i=1}^{n}\mu _{i}^{\frac{1}{p-1}}a_{i}^{q}}{\left(
\sum_{j=1}^{n}\mu _{j}^{\frac{1}{p-1}}a_{j}^{q}\right) ^{p}}\left(
\left\vert \left( \frac{1}{a_{i}\mu _{i}}\right) ^{\frac{1}{p-1}}\left(
\sum_{j=1}^{n}\mu _{j}^{\frac{1}{p-1}}a_{j}^{q}\right)
x_{i}-\sum_{i=1}^{n}a_{j}x_{j}\right\vert \right) ^{p}.  \notag
\end{eqnarray}

Hence from (\ref{eq2.5}) and (\ref{eq2.9}) we get\ that (\ref{eq2.4}) holds.

If $1<p\leq 2$ then $f\left( x\right) =x^{p},$ $x\geq 0$ is a subquadratic
function, therefore the reverse of (\ref{eq2.4}) holds.
\end{proof}

\begin{corollary}
\label{Cor2}In case n=2 we get that%
\begin{eqnarray}
\frac{x^{p}}{\mu }+\frac{y^{p}}{\nu } &\geq &\frac{\left( ax+by\right) ^{p}}{%
\left( \mu ^{\frac{1}{p-1}}a^{q}+\nu ^{\frac{1}{p-1}}b^{q}\right) ^{p-1}}
\label{eq2.10} \\
&&+\mu ^{\frac{1}{p-1}}a^{q}\left( \left\vert \left( \frac{1}{a\mu }\right)
^{\frac{1}{p-1}}x-\frac{ax+by}{\mu ^{\frac{1}{p-1}}a^{q}+\nu ^{\frac{1}{p-1}%
}b^{q}}\right\vert \right) ^{p}  \notag \\
&&+\nu ^{\frac{1}{p-1}}b^{q}\left( \left\vert \left( \frac{1}{\nu b}\right)
^{\frac{1}{p-1}}y-\frac{ax+by}{\mu ^{\frac{1}{p-1}}a^{q}+\nu ^{\frac{1}{p-1}%
}b^{q}}\right\vert \right) ^{p}  \notag
\end{eqnarray}%
In particular if $f(x)=x^{2},$ $n=2$ as Inequality (\ref{eq2.4}) reduces to
equality we get from (\ref{eq2.10}) that 
\begin{equation*}
\frac{x^{2}}{\mu }+\frac{y^{2}}{\nu }=\frac{\left( ax+by\right) ^{2}}{\mu
a^{2}+\nu b^{2}}+\frac{\left( \nu bx-a\mu y\right) ^{2}}{\mu \nu \left( \mu
a^{2}+\nu b^{2}\right) },
\end{equation*}%
which is Euler Lagrange type identity.
\end{corollary}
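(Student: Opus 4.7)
The strategy is a direct two-step specialization of Theorem \ref{Th3}. First, I would set $n=2$ and relabel $x_{1}=x$, $x_{2}=y$, $a_{1}=a$, $a_{2}=b$, $\mu_{1}=\mu$, $\mu_{2}=\nu$, abbreviating $S:=\mu^{1/(p-1)}a^{q}+\nu^{1/(p-1)}b^{q}$ for the recurring denominator. The first summand on the right of (\ref{eq2.4}) immediately becomes $(ax+by)^{p}/S^{p-1}$. For each of the two remaining absolute-value summands I would pull the factor $S$ through the absolute value to cancel the $S^{p}$ sitting in the denominator: the $i=1$ contribution is
\[
\frac{\mu^{1/(p-1)}a^{q}}{S^{p}}\Bigl|(a\mu)^{-1/(p-1)}S\,x-(ax+by)\Bigr|^{p}=\mu^{1/(p-1)}a^{q}\Bigl|(a\mu)^{-1/(p-1)}x-\tfrac{ax+by}{S}\Bigr|^{p},
\]
and analogously for $i=2$ with $(\mu,a,x)$ replaced by $(\nu,b,y)$. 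Reassembling the three pieces yields exactly (\ref{eq2.10}).

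For the Euler--Lagrange identity I would set $p=q=2$ and invoke the remark preceding Theorem \ref{Th3}: the function $f(t)=t^{2}$ is simultaneously super- and subquadratic, so the inequality of Theorem \ref{Th3} and its reverse (from the subquadratic case) both apply, forcing equality in the $p=2$ instance of (\ref{eq2.10}). With $p=q=2$ one has $S=\mu a^{2}+\nu b^{2}$, and the two remainder terms reduce to $\mu a^{2}\bigl|x/(a\mu)-(ax+by)/S\bigr|^{2}$ and $\nu b^{2}\bigl|y/(\nu b)-(ax+by)/S\bigr|^{2}$.

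The only step requiring genuine computation is showing that these two remainders combine into the single squared cross term. I would reduce each inner expression to the common denominator, finding the numerators $x(\mu a^{2}+\nu b^{2})-a\mu(ax+by)=b(\nu bx-a\mu y)$ and $y(\mu a^{2}+\nu b^{2})-\nu b(ax+by)=a(\mu ay-\nu bx)$, both of which are scalar multiples of the single expression $\nu bx-a\mu y$. Squaring gives remainders $b^{2}(\nu bx-a\mu y)^{2}/(\mu S^{2})$ and $a^{2}(\nu bx-a\mu y)^{2}/(\nu S^{2})$; their sum factors as
\[
\frac{(\nu bx-a\mu y)^{2}}{S^{2}}\left(\frac{b^{2}}{\mu}+\frac{a^{2}}{\nu}\right)=\frac{(\nu bx-a\mu y)^{2}}{S^{2}}\cdot\frac{\mu a^{2}+\nu b^{2}}{\mu\nu}=\frac{(\nu bx-a\mu y)^{2}}{\mu\nu(\mu a^{2}+\nu b^{2})},
\]
which is the Euler--Lagrange remainder claimed. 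The main obstacle is purely bookkeeping---organising the exponents $1/(p-1)$ and the factors of $S$ long enough to see the common scalar $\nu bx-a\mu y$ emerge after the $p=2$ reduction---but no analytic input beyond Theorem \ref{Th3} and the trivial observation that $t^{2}$ is simultaneously super- and subquadratic is needed.
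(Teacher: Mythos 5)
Your proposal is correct and follows exactly the route the paper intends: the corollary is the $n=2$ specialization of Theorem \ref{Th3} (with the factor $S$ absorbed into the absolute values), and the Euler--Lagrange identity follows by setting $p=2$, where $t^{2}$ being both super- and subquadratic forces equality. The paper omits these computations entirely, and your algebra reducing both remainder terms to multiples of $(\nu bx-a\mu y)^{2}$ checks out.
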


\begin{corollary}
\label{Cor3} From Theorem \ref{Th3} as $f\left( x\right) =x^{p},$ $1<p\leq 2$
is both subquadratic and convex, we get that 
\begin{eqnarray*}
0 &\leq &\sum_{i=1}^{n}\frac{x_{i}^{p}}{\mu _{i}}-\frac{\left( \sum
a_{i}x_{i}\right) ^{p}}{\left( \sum_{i=1}^{n}\mu _{i}^{\frac{1}{p-1}%
}a_{i}^{q}\right) ^{p-1}} \\
&&\leq \frac{\sum_{i=1}^{n}\mu _{i}^{\frac{1}{p-1}}a_{i}^{q}}{\left(
\sum_{j=1}^{n}\mu _{j}^{\frac{1}{p-1}}a_{j}^{q}\right) ^{p}}\left(
\left\vert \left( \frac{1}{a_{i}\mu _{i}}\right) ^{\frac{1}{p-1}}\left(
\sum_{j=1}^{n}\mu _{j}^{\frac{1}{p-1}}a_{j}^{q}\right)
x_{i}-\sum_{j=1}^{n}a_{j}x_{j}\right\vert \right) ^{p}.
\end{eqnarray*}
\end{corollary}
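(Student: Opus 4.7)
The plan is to extract both bounds from results already proved in the paper, noting that for $1<p\le 2$ the power function $x^{p}$ is \emph{simultaneously} convex and subquadratic. The two inequalities in Corollary \ref{Cor3} then come from two different properties of $x^{p}$ applied to the same normalization used in Theorem \ref{Th3}.

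First I would handle the lower bound. Theorem \ref{Th2}, Case (i), holds for every $p>1$, and in the proof it was shown that the smallest admissible value of $\lambda$ is $\overline{\lambda}=\bigl(\sum_{j=1}^{n}\mu_{j}^{1/(p-1)}a_{j}^{q}\bigr)^{p-1}$; see (\ref{eq1.15})--(\ref{eq1.16}). Specializing (\ref{eq1.5})--(\ref{eq1.6}) at $\lambda=\overline{\lambda}$ and rearranging immediately gives
\[
0 \;\le\; \sum_{i=1}^{n}\frac{x_{i}^{p}}{\mu_{i}} \;-\; \frac{\bigl(\sum_{i=1}^{n}a_{i}x_{i}\bigr)^{p}}{\bigl(\sum_{i=1}^{n}\mu_{i}^{1/(p-1)}a_{i}^{q}\bigr)^{p-1}},
\]
which is the left-hand inequality. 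This step uses only convexity of $x^{p}$, which is valid throughout $p>1$.

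Next I would obtain the upper bound by invoking the reverse of (\ref{eq2.4}), stated at the end of Theorem \ref{Th3} for $1<p\le 2$. With the quantities $Q_{i}$ and $A_{i}$ defined as in (\ref{eq2.6})--(\ref{eq2.8}), the subquadratic inequality (\ref{eq2.3}) reverses, and moving the term $(\sum a_{i}x_{i})^{p}/(\sum \mu_{j}^{1/(p-1)}a_{j}^{q})^{p-1}$ to the left-hand side yields exactly
\[
\sum_{i=1}^{n}\frac{x_{i}^{p}}{\mu_{i}} \;-\; \frac{\bigl(\sum a_{i}x_{i}\bigr)^{p}}{\bigl(\sum_{j=1}^{n}\mu_{j}^{1/(p-1)}a_{j}^{q}\bigr)^{p-1}} \;\le\; \frac{\sum_{i=1}^{n}\mu_{i}^{1/(p-1)}a_{i}^{q}}{\bigl(\sum_{j=1}^{n}\mu_{j}^{1/(p-1)}a_{j}^{q}\bigr)^{p}} \Bigl(\bigl|\,(a_{i}\mu_{i})^{-1/(p-1)}\bigl(\textstyle\sum_{j}\mu_{j}^{1/(p-1)}a_{j}^{q}\bigr)x_{i}-\sum_{j}a_{j}x_{j}\,\bigr|\Bigr)^{p},
\]
the right-hand inequality of the corollary.

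There is no real obstacle here beyond bookkeeping: the substantial work has already been done in Theorems \ref{Th2} and \ref{Th3}. The only point requiring care is that the two bounds come from two different features of $x^{p}$ on $[0,\infty)$ in the overlap range $1<p\le 2$, namely convexity (to get the Jensen-type lower bound with the optimal $\overline{\lambda}$) and subquadracity (to get the reverse of the superquadratic refinement). Everything else is algebraic re-arrangement of the identities (\ref{eq2.5})--(\ref{eq2.8}).
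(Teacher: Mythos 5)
Your proposal is correct and follows exactly the route the paper intends: the left inequality is Theorem \ref{Th2}, Case (i), with the optimal constant $\overline{\lambda}=\bigl(\sum_{j=1}^{n}\mu_{j}^{1/(p-1)}a_{j}^{q}\bigr)^{p-1}$ from (\ref{eq1.15})--(\ref{eq1.16}) (convexity), and the right inequality is the reversed form of (\ref{eq2.4}) for $1<p\leq 2$ (subquadracity), rearranged. The paper offers no further detail beyond this observation, so your bookkeeping fills it in faithfully.
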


\bigskip

\bigskip


\begin{thebibliography}{9}
\bibitem{AJS} S. Abramovich, G. Jameson and G. Sinnamon, \textit{Refining
Jensen's inequality,} Bull. Math. Sc. Math. Roum. \textbf{47} (2004), 3-14

\bibitem{ABP} S. Abramovich, J. Baric, and J. Pecaric, \textit{%
Superquadracity, Bohr's inequality and deviation from a Mean Value},
Australian Journal of Mathematical Analysis and Applications, \textbf{7}
(1), (2009), Article 1.

\bibitem{TRST} \ Sin-Ei Takahasi, J. M. Rassias, S. Saitoh, and Y.
Takahashi, \textit{Refined generalization of the triangle inequality on
Banach spaces}, Journal of Mathematical Inequalities and Applications, 2010.
\end{thebibliography}
\end{document}